\newtheorem{theorem}{Theorem}[section]
\newtheorem{proposition}[theorem]{Proposition}
\newtheorem{corollary}[theorem]{Corollary}
\newtheorem{definition}[theorem]{Definition}
\newtheorem{observation}[theorem]{Observation}
\newtheorem{question}[theorem]{Question}
\renewcommand{\leq}{\leqslant}
\renewcommand{\geq}{\geqslant}
\long\def\@savemarbox#1#2{\global\setbox#1\vtop{\hsize\marginparwidth 
  \@parboxrestore\tiny\raggedright #2}}
\renewcommand*\env@matrix[1][\arraystretch]{%
\edef\arraystretch{#1}%
\hskip -\arraycolsep
\let\@ifnextchar\new@ifnextchar
\array{*\c@MaxMatrixCols c}}
\date{\today}
\date{\today}
\title{On projective Anosov subgroups of symplectic groups}
\date{\today}
\author{Maria Beatrice Pozzetti and Konstantinos Tsouvalas}
\begin{document}

\frenchspacing

\begin{abstract} We prove that a word hyperbolic group whose Gromov boundary properly contains a $2$-sphere cannot admit a projective Anosov representation into $\mathsf{Sp}_{2m}(\mathbb{C})$, $m\in \mathbb{N}$. We also prove that a word hyperbolic group which admits a projective Anosov representation into $\mathsf{Sp}_{2m}(\mathbb{R})$ is virtually a free group or virtually a surface group, a result established indepedently by Dey--Greenberg--Riestenberg. \end{abstract}

\maketitle

\section{Introduction}

Anosov representations of word hyperbolic groups into real semisimple Lie groups were introduced by Labourie in his study of the Hitchin component \cite{Labourie} and generalized by Guichard--Wienhard in \cite{GW}.
Anosov representations form a rich and stable class of discrete subgroups of real Lie groups, extensively studied by Kapovich--Leeb--Porti \cite{KLP2}, Gu\'eritaud--Guichard--Kassel--Wienhard \cite{GGKW}, Bochi--Potrie--Sambarino \cite{BPS} and others, and today are recognized as the correct higher rank generalization of convex cocompact subgroups of rank one Lie groups.

Let $\mathbb{K}=\mathbb{R}$ or $\mathbb{C}$ and an integer $1\leq k\leq \frac{d}{2}$. A representation $\rho:\Gamma \rightarrow \mathsf{SL}_d(\mathbb{K})$ of a word hyperbolic group $\Gamma$ is called \emph{$k$-Anosov} if it is Anosov with respect to the pair of opposite parabolic subgroups of $\mathsf{SL}_{d}(\mathbb{K})$ obtained as the stabilizers of a $k$-plane and a complementary $(d-k)$-plane (see \S\ref{background}). In this note, we study $1$-Anosov (or projective Anosov) representations whose images are subgroups of the symplectic group $\mathsf{Sp}_{2m}(\mathbb{K})$, $m\geq 1$. The first main result of this note is the following.

\begin{theorem}\label{symp1} Suppose that $\Gamma$ is a word hyperbolic group which admits an $1$-Anosov representation $\rho:\Gamma \rightarrow \mathsf{Sp}_{2m}(\mathbb{R})$. Then $\Gamma$ is virtually a free group or virtually a surface group. \end{theorem}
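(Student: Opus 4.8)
The plan is to leverage the fact that a $1$-Anosov representation into $\mathsf{Sp}_{2m}(\mathbb{R})$ carries extra structure coming from the symplectic form, and to use this to bound the topological dimension of the Gromov boundary $\partial_\infty \Gamma$. Let me think about what the symplectic structure gives us.

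Let me recall the setup. A projective Anosov representation $\rho: \Gamma \to \mathsf{SL}_d(\mathbb{K})$ comes with continuous $\rho$-equivariant limit maps $\xi: \partial_\infty \Gamma \to \mathbb{P}(\mathbb{K}^d)$ and $\xi^*: \partial_\infty \Gamma \to \mathbb{P}((\mathbb{K}^d)^*)$ (equivalently into the Grassmannian of hyperplanes), satisfying transversality: for $x \neq y$, the line $\xi(x)$ is not contained in the hyperplane $\xi^*(y)$. The maps are antipodal/dynamics-preserving. For the real symplectic case $d = 2m$, the symplectic form $\omega$ gives an identification of $\mathbb{R}^{2m}$ with its dual, so the hyperplane $\xi^*(x)$ is the $\omega$-orthogonal complement of the line $\xi(x)$. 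Transversality then says: for $x\neq y$, the lines $\xi(x)$ and $\xi(y)$ span a two-dimensional symplectic (non-degenerate) subspace, i.e. $\omega(\xi(x),\xi(y)) \neq 0$.

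So here is my proposed argument. Consider the map $\partial_\infty\Gamma \to \mathbb{P}(\mathbb{R}^{2m})$ given by $\xi$, and for two distinct boundary points evaluate $\omega$ on the corresponding lines; transversality means this never vanishes. The key idea is that $\xi$ is a topological embedding (limit maps of Anosov representations are embeddings), so $\partial_\infty\Gamma$ is homeomorphic to a compact subset $X = \xi(\partial_\infty\Gamma) \subset \mathbb{P}(\mathbb{R}^{2m})$ with the property that any two distinct points are ``$\omega$-transverse.'' I would now argue that such an antipodal/transverse subset of real projective space must have topological dimension at most $1$. The mechanism: restricting to an affine chart around a point $\xi(x_0)$, transversality with respect to the skew-symmetric form $\omega$ forces the other points to lie in a region where a certain linear functional (the symplectic pairing against $\xi(x_0)$) is nonzero, and iterating/combining these constraints should force $X$ to be a curve. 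This is the crux, and the natural tool is that a transverse (Anosov) limit set locally looks like the graph of a Lipschitz function, but the symplectic condition is more rigid than general transversality and must cut the dimension down to $1$.

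The cleanest route, and the one I expect to be forced on us, is to invoke Theorem~\ref{symp1}'s complex companion (the statement proved for $\mathsf{Sp}_{2m}(\mathbb{C})$ that $\partial_\infty\Gamma$ cannot properly contain a $2$-sphere) together with a dimension count: a word hyperbolic group $\Gamma$ is virtually free or virtually a surface group precisely when $\partial_\infty\Gamma$ is empty, finite, a Cantor set, or a circle, i.e. has topological dimension $\leq 1$ and is sufficiently simple. Thus the task reduces to showing $\dim \partial_\infty\Gamma \leq 1$, after which one applies the classification of hyperbolic groups with one-dimensional boundary (by Kapovich--Kleiner or Bowditch) and rules out the remaining non-surface, non-free cases using the symplectic transversality.

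The main obstacle will be establishing the dimension bound $\dim\xi(\partial_\infty\Gamma)\le 1$ from symplectic transversality alone. I expect the argument to proceed by contradiction: assume $\dim \partial_\infty\Gamma \geq 2$, extract (via the structure of the limit map and a blow-up/tangent-cone argument at a point of the limit set) a two-dimensional transverse set, and then derive a contradiction with the skew-symmetry of $\omega$. Concretely, the symplectic pairing $\omega(\xi(x),\xi(y))$ descends to a continuous, nowhere-vanishing, antisymmetric section over the configuration space of pairs of distinct boundary points; on a $2$-sphere's worth of boundary this produces a nonvanishing object whose sign-behavior contradicts antisymmetry or a parity/degree obstruction (analogous to the nonembedding of $S^2$ with an antipodal-free antisymmetric pairing). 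The real case should then follow from the complex case by complexification, noting that $\mathsf{Sp}_{2m}(\mathbb{R}) \hookrightarrow \mathsf{Sp}_{2m}(\mathbb{C})$, so the first theorem of the paper rules out a $2$-sphere in the boundary; the additional work specific to the real statement is to upgrade ``no $2$-sphere'' to the full dimension bound and then to the virtually-free-or-surface conclusion, which is where the real structure and the classification of one-dimensional boundaries do the remaining work.
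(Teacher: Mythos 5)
You correctly identify the key input: for a $1$-Anosov $\rho$ into $\mathsf{Sp}_{2m}$, the dual limit map is the $\omega$-orthogonal of the line map, so transversality becomes $\omega(\xi(x),\xi(y))\neq 0$ for $x\neq y$. But the route you then sketch has genuine gaps. First, your proposed reduction of the real case to the complex statement (``no properly embedded $2$-sphere'') cannot work: a hyperbolic group with Menger-curve or Sierpinski-carpet boundary contains no $2$-sphere in its boundary, yet is neither virtually free nor virtually a surface group, so ``no $2$-sphere'' plus a dimension bound does not yield the conclusion. Likewise your claim that $\dim\partial_\infty\Gamma\leq 1$ together with ``sufficient simplicity'' characterizes virtually free/surface groups is false without ruling out exactly these cases, and you give no mechanism for doing so; your separate suggestion that symplectic transversality forces $\dim\xi(\partial_\infty\Gamma)\leq 1$ is asserted but not argued. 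The paper's actual argument is a one-dimensional Borsuk--Ulam (intermediate value theorem) argument that handles all these cases at once: if $\Gamma$ is not virtually free, Bonk--Kleiner gives an embedded circle in $\partial_\infty\Gamma$; if $\Gamma$ is moreover not virtually a surface group, Gabai's theorem shows $\partial_\infty\Gamma$ is not the circle, so the circle embeds in $\partial_\infty\Gamma\smallsetminus\{w_0\}$ for some $w_0$; then $L_1(z)=\omega\bigl(\widetilde{\xi}(\iota_1(z)),\widetilde{\xi}(\iota_1(-z))\bigr)$ is a continuous, odd, real-valued, nowhere-vanishing function on $S^1$, which is impossible. You never isolate this step, and it is the whole point of the real case: over $\mathbb{R}$ the obstruction is a properly embedded \emph{circle}, not a $2$-sphere.

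Second, you gloss over a necessary technical point: $\omega(\xi(x),\xi(y))$ is only defined up to nonzero scalars when $\xi(x),\xi(y)$ are lines, so ``odd'' and ``sign'' have no meaning without a continuous lift of $\xi$ to vectors. Such a lift does not exist globally on $\partial_\infty\Gamma$ (the tautological bundle is nontrivial over the image in general), which is why the paper first deletes a point $w_0$ and uses transversality of $\xi^1(x)$ with the fixed hyperplane $\xi^{2m-1}(w_0)$ to produce a continuous lift on $\partial_\infty\Gamma\smallsetminus\{w_0\}$ (Proposition~\ref{lift2}). Your phrase about the pairing ``descending to a nowhere-vanishing antisymmetric section'' does not substitute for this construction. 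In summary: right key observation, but the argument as proposed does not close, and the missing ingredients (Bonk--Kleiner plus Gabai to get a properly embedded circle, the lift after puncturing, and the intermediate value theorem applied to the odd real-valued pairing) are exactly the content of the paper's proof.
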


Theorem \ref{symp1} was established independently and with different techniques by Dey--Greenberg--Riestenberg in \cite{DGR}. Our second main result provides restrictions for 1-Anosov representations into complex symplectic groups.

\begin{theorem}\label{symp2} Suppose that $\Gamma$ is a word hyperbolic group which admits an $1$-Anosov representation $\rho:\Gamma \rightarrow \mathsf{Sp}_{2m}(\mathbb{C})$. Then the Gromov boundary of $\Gamma$ cannot properly contain a $2$-sphere. \end{theorem}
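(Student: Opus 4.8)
The plan is to combine the antisymmetry of the symplectic form with the transversality of the Anosov limit map, using the proper containment hypothesis to supply a trivialization, and then to invoke the Borsuk--Ulam theorem. Recall that a projective ($1$-)Anosov representation $\rho:\Gamma\to\mathsf{Sp}_{2m}(\mathbb{C})$ comes equipped with a continuous $\rho$-equivariant limit map $\xi:\partial_\infty\Gamma\to\mathbb{P}(\mathbb{C}^{2m})$. Since $\rho$ preserves the symplectic form $\omega$ and the Anosov limit maps are unique, the dual (hyperplane) limit map is forced to be $y\mapsto\xi(y)^{\perp_\omega}$; transversality then reads $\omega(\xi(x),\xi(y))\neq 0$ for all distinct $x,y\in\partial_\infty\Gamma$. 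Thus $\omega\circ(\xi\times\xi)$ is a continuous, antisymmetric pairing on $\partial_\infty\Gamma\times\partial_\infty\Gamma$ that is nonzero off the diagonal.

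Now I would argue by contradiction, assuming that a topological $2$-sphere $S\cong S^2$ is \emph{properly} contained in $\partial_\infty\Gamma$, and fixing a point $z\in\partial_\infty\Gamma\setminus S$ with $e:=\xi(z)\in\mathbb{C}^{2m}\setminus\{0\}$. The crucial use of proper containment is a trivialization: by transversality $\omega(e,v)\neq 0$ for every nonzero $v\in\xi(x)$ with $x\in S$, so $\xi(S)$ avoids the hyperplane $\mathbb{P}(e^{\perp_\omega})$ and lands in the affine chart $\{[v]:\omega(e,v)\neq 0\}$. On this chart the tautological line bundle is canonically trivial, so normalizing $\omega(e,\cdot)=1$ produces a continuous lift $\tilde\xi:S\to\{v\in\mathbb{C}^{2m}:\omega(e,v)=1\}$ of $\xi|_S$. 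This step is exactly what fails when $S=\partial_\infty\Gamma$, since then the pulled-back tautological bundle over $S^2$ may have nonzero first Chern class and admit no global section; this is where the word \emph{properly} enters.

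With the lift available, fix a homeomorphism of $S$ with the round sphere and let $\alpha:S\to S$ be the transported antipodal map, a free involution with $\alpha(x)\neq x$ for all $x$. Define $h:S\to\mathbb{C}$ by $h(x)=\omega\bigl(\tilde\xi(x),\tilde\xi(\alpha(x))\bigr)$. Then $h$ is continuous; it is nowhere vanishing because $x$ and $\alpha(x)$ are distinct points of $\partial_\infty\Gamma$; and it is odd, since $\alpha$ is an involution and $\omega$ is antisymmetric, giving $h(\alpha(x))=\omega(\tilde\xi(\alpha(x)),\tilde\xi(x))=-h(x)$. Transporting $h$ to the round $S^2$ yields a continuous odd map $S^2\to\mathbb{C}\cong\mathbb{R}^2$ that never vanishes, contradicting Borsuk--Ulam: that theorem forces some $x_0$ with $h(x_0)=h(\alpha(x_0))=-h(x_0)$, hence $h(x_0)=0$. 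This contradiction establishes the theorem.

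I expect the main obstacle to be the trivialization step of the second paragraph: one must check that proper containment genuinely produces a \emph{global} continuous lift $\tilde\xi$, and appreciate why the method necessarily permits the borderline case $\partial_\infty\Gamma=S^2$. By contrast, the identification of the dual limit map through $\omega$, the antisymmetry bookkeeping that makes $h$ odd, and the final appeal to Borsuk--Ulam are routine once the correct pairing $h$ has been isolated.
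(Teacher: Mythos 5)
Your proposal is correct and follows essentially the same route as the paper: identify the dual limit map with $\xi(\cdot)^{\perp_\omega}$ so that transversality gives $\omega(\xi(x),\xi(y))\neq 0$ for $x\neq y$, use the point outside the sphere to trivialize and lift $\xi$ over $\partial_\infty\Gamma\smallsetminus\{w_0\}$, and derive a contradiction from Borsuk--Ulam applied to the odd, nowhere-vanishing map $z\mapsto\omega\bigl(\tilde\xi(z),\tilde\xi(-z)\bigr)$. The only cosmetic difference is your affine normalization $\omega(e,\cdot)=1$ for the lift in place of the paper's unit-sphere lift.
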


As a consequence of the previous two theorems and the fact that odd exterior powers of the symplectic group are symplectic we obtain the following corollary.

\begin{corollary}\label{odd}  Suppose that $\Gamma$ is a word hyperbolic group and $\rho: \Gamma \rightarrow \mathsf{Sp}_{2m}(\mathbb{K})$ is a $k$-Anosov representation where $1\leq k\leq m$ is an odd integer.

\noindent \textup{(i)} If $\mathbb{K}=\mathbb{R}$ then $\Gamma$ is virtually a surface group or a free group.\\
\noindent \textup{(ii)} If $\mathbb{K}=\mathbb{C}$ then the Gromov boundary of $\Gamma$ cannot properly contain a $2$-sphere.\end{corollary}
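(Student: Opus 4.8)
The plan is to deduce Corollary \ref{odd} directly from Theorems \ref{symp1} and \ref{symp2} by post-composing $\rho$ with the $k$-th exterior power representation $\Lambda^{k}\colon \mathsf{Sp}_{2m}(\mathbb{K})\to \mathsf{GL}\big(\Lambda^{k}\mathbb{K}^{2m}\big)$, and observing that for odd $k$ this lands inside a symplectic group. Writing $V=\mathbb{K}^{2m}$ with symplectic form $\omega$, I would first equip $\Lambda^{k}V$ with the induced bilinear form
\[
\langle v_{1}\wedge\cdots\wedge v_{k},\, w_{1}\wedge\cdots\wedge w_{k}\rangle \;=\; \det\big(\omega(v_{i},w_{j})\big)_{i,j=1}^{k}.
\]
This form is $\mathsf{Sp}_{2m}(\mathbb{K})$-invariant because $\omega$ is, and it is nondegenerate since its Gram matrix in an induced basis is the $k$-th compound of the (nondegenerate) Gram matrix of $\omega$. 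Exchanging the two arguments replaces the matrix $\big(\omega(v_{i},w_{j})\big)$ by $-\big(\omega(v_{i},w_{j})\big)^{\mathsf{T}}$, hence multiplies the determinant by $(-1)^{k}$; therefore the form is skew-symmetric exactly when $k$ is odd. Fixing a symplectic basis of $\Lambda^{k}V$ then realizes $\Lambda^{k}\big(\mathsf{Sp}_{2m}(\mathbb{K})\big)\subseteq \mathsf{Sp}_{N}(\mathbb{K})$, where $N=\binom{2m}{k}$ is necessarily even. This is the sense in which ``odd exterior powers of the symplectic group are symplectic.''

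The second ingredient is the standard exterior-power characterization of Anosov representations recalled in \S\ref{background}: for $g\in\mathsf{GL}(V)$ the two largest singular values of $\Lambda^{k}g$ are $\prod_{i\le k}\sigma_{i}(g)$ and $\sigma_{k+1}(g)\prod_{i\le k-1}\sigma_{i}(g)$, so the first singular-value gap of $\Lambda^{k}g$ equals the $k$-th gap of $g$. Consequently $\rho$ is $k$-Anosov if and only if $\Lambda^{k}\circ\rho$ is $1$-Anosov. Combining this with the previous paragraph, the composition $\Lambda^{k}\circ\rho\colon\Gamma\to\mathsf{Sp}_{N}(\mathbb{K})$ is a projective Anosov representation into a symplectic group of the same type $\mathbb{K}$. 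It then remains only to invoke the main theorems: for $\mathbb{K}=\mathbb{R}$, Theorem \ref{symp1} forces $\Gamma$ to be virtually free or virtually a surface group, giving (i); for $\mathbb{K}=\mathbb{C}$, Theorem \ref{symp2} forces the Gromov boundary of $\Gamma$ not to properly contain a $2$-sphere, giving (ii).

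I do not expect a genuinely hard analytic step here, since all the real content is carried by Theorems \ref{symp1} and \ref{symp2}. The two points that require care are the parity and nondegeneracy computation that guarantees the target is honestly symplectic (rather than orthogonal, which is what happens for even $k$), and the verification that $\Lambda^{k}\circ\rho$ is Anosov with respect to the correct parabolic of $\mathsf{Sp}_{N}(\mathbb{K})$ rather than merely of $\mathsf{GL}_{N}(\mathbb{K})$. The latter is automatic, however: a representation into $\mathsf{GL}_{N}(\mathbb{K})$ that is projective Anosov and whose image preserves a symplectic form is $1$-Anosov into $\mathsf{Sp}_{N}(\mathbb{K})$, because every line is isotropic and the stabilizer of a point in $\mathbb{P}(\mathbb{K}^{N})$ intersected with $\mathsf{Sp}_{N}(\mathbb{K})$ is the corresponding maximal parabolic, so the equivariant boundary maps already take values in the flag variety of $\mathsf{Sp}_{N}(\mathbb{K})$.
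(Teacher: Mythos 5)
Your proposal is correct and follows essentially the same route as the paper: observe that the odd exterior power of a symplectic group preserves a symplectic form (the paper phrases this as $\wedge^{k}\Omega_{m}$ being skew-symmetric, which is your compound-matrix computation), note that $\wedge^{k}\rho$ is $1$-Anosov when $\rho$ is $k$-Anosov, and apply Theorems \ref{symp1} and \ref{symp2}. Your final paragraph about the parabolic of $\mathsf{Sp}_{N}(\mathbb{K})$ is not needed under the paper's convention, which only requires $1$-Anosov as a representation into $\mathsf{SL}_{N}(\mathbb{K})$ with image contained in the symplectic group.
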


In particular, every word hyperbolic group which contains an infinite index quasiconvex subgroup isomorphic to a uniform lattice in $\mathsf{SL}_2(\mathbb{K})$ fails to admit a $k$-Anosov representation into $\mathsf{Sp}_{2m}(\mathbb{K})$ for $k$ odd with $1\leq k \leq m$ .

A representation $\rho:\Gamma \rightarrow \mathsf{SL}_d(\mathbb{K})$ is called {\em Borel Anosov} if $\rho$ is $k$-Anosov for every \hbox{$1\leq k\leq \frac{d}{2}$.} To our knowledge, the only known examples of word hyperbolic groups admitting Borel Anosov representation into $\mathsf{SL}_d(\mathbb{C})$, for some $d\geq 4$, are virtually isomorphic to a convex cocompact Kleinian group. The following corollary of Theorem \ref{symp2} provides several examples of word hyperbolic groups which fail to admit Borel Anosov representations into $\mathsf{SL}_d(\mathbb{C})$ for infinitely many $d\in \mathbb{N}$.

\begin{corollary}\label{2q+1} Suppose that $\Gamma$ is a word hyperbolic group whose Gromov boundary properly contains a $2$-sphere. Then there is no $(2q+1)$-Anosov representation $\rho:\Gamma \rightarrow \mathsf{SL}_{4q+2}(\mathbb{C})$.\end{corollary}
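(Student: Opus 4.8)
The plan is to deduce this from Theorem \ref{symp2} by means of the middle exterior power representation, exploiting the fact that $2q+1$ is both odd and equal to half of $4q+2$. First I would argue by contradiction: suppose $\rho:\Gamma \to \mathsf{SL}_{4q+2}(\mathbb{C})$ is $(2q+1)$-Anosov, and pass to the middle exterior power $\Lambda^{2q+1}\rho: \Gamma \to \mathsf{SL}(\Lambda^{2q+1}\mathbb{C}^{4q+2})$.

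The first ingredient is the standard functoriality of the Anosov condition under exterior powers: if $\rho$ is $k$-Anosov then $\Lambda^k\rho$ is $1$-Anosov (projective Anosov). This follows from the singular-value characterization of Anosov representations, since the largest singular value of $\Lambda^k g$ is $\sigma_1(g)\cdots\sigma_k(g)$ and the second largest is $\sigma_1(g)\cdots\sigma_{k-1}(g)\sigma_{k+1}(g)$, so that the gap between the first two singular values of $\Lambda^k g$ is exactly the ratio $\sigma_k(g)/\sigma_{k+1}(g)$; the associated limit map is obtained by post-composing the $k$-dimensional limit map of $\rho$ with the Plücker embedding. Applying this with $k=2q+1$ and $d=4q+2$ shows that $\Lambda^{2q+1}\rho$ is projective Anosov.

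Next I would check that this projective Anosov representation in fact takes values in a complex symplectic group. On $\Lambda^{2q+1}\mathbb{C}^{4q+2}$ consider the wedge pairing $(\alpha,\beta)\mapsto \alpha\wedge\beta \in \Lambda^{4q+2}\mathbb{C}^{4q+2}\cong\mathbb{C}$, which is non-degenerate and satisfies $\alpha\wedge\beta = (-1)^{(2q+1)^2}\beta\wedge\alpha = -\beta\wedge\alpha$; thus it is an alternating, i.e.\ symplectic, form precisely because $2q+1$ is odd. Since every element of $\mathsf{SL}_{4q+2}(\mathbb{C})$ acts trivially on $\Lambda^{4q+2}\mathbb{C}^{4q+2}$, the image of the middle exterior power lies in $\mathsf{Sp}_{2m}(\mathbb{C})$, where $2m=\binom{4q+2}{2q+1}$. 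Hence $\Lambda^{2q+1}\rho$ is a $1$-Anosov representation of $\Gamma$ into $\mathsf{Sp}_{2m}(\mathbb{C})$.

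Finally, Theorem \ref{symp2} applies to $\Lambda^{2q+1}\rho$ and forces the Gromov boundary of $\Gamma$ to fail to properly contain a $2$-sphere, contradicting the hypothesis; so no such $\rho$ exists. I do not expect a genuine obstacle here, as the substance of the statement is entirely contained in Theorem \ref{symp2}: both auxiliary ingredients are elementary. The only point requiring care is the bookkeeping of parity, namely that one must use that $2q+1$ is odd (so that the middle wedge pairing is alternating rather than symmetric) and that $2q+1=(4q+2)/2$ (so that the pairing lands in the top exterior power and is non-degenerate).
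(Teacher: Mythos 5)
Your proposal is correct and follows exactly the paper's own argument: pass to the middle exterior power $\wedge^{2q+1}\rho$, observe that the wedge pairing $u\wedge v$ on $\wedge^{2q+1}\mathbb{C}^{4q+2}$ is a symplectic form preserved by $\wedge^{2q+1}\mathsf{SL}_{4q+2}(\mathbb{C})$ because $2q+1$ is odd, and apply Theorem \ref{symp2}. The extra justifications you supply (the singular-value computation for exterior powers and the parity check for alternation) are correct and consistent with the facts the paper cites as standard.
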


The method of proof of Theorem \ref{symp1} and Theorem \ref{symp2} allows us to obtain analogous restrictions for certain classes of hyperconvex reprsentations. The notion of a $(p,q,r)$-hyperconvex representation (see Definition \ref{hyperconvex}) was introduced by Pozzetti--Sambarino--Wienhard in \cite{PSW} and shares common tranvsersality properties with Hitchin representations. 

\begin{theorem}\label{hyp} Let $\Gamma$ be a word hyperbolic group and $1\leq p\leq \frac{d}{2}$ an odd integer. Suppose that $\rho:\Gamma \rightarrow \mathsf{GL}_{d}(\mathbb{K})$ is a $(p,p,2p)$-hyperconvex representation.\\
\noindent \textup{(i)} If $\mathbb{K}=\mathbb{R}$, then $\Gamma$ is virtually a surface group or a free group.\\
\noindent \textup{(ii)} If $\mathbb{K}=\mathbb{C}$, then the Gromov boundary of $\Gamma$ cannot properly contain a $2$-sphere.\end{theorem}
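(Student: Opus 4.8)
The plan is to reduce Theorem~\ref{hyp} to the two symplectic results already announced, by exploiting the transversality built into hyperconvexity together with the duality structure of odd exterior powers. Recall that a $(p,p,2p)$-hyperconvex representation $\rho:\Gamma\to\mathsf{GL}_d(\mathbb{K})$ has, by Definition~\ref{hyperconvex}, boundary maps $\xi^p,\xi^{2p}:\partial_\infty\Gamma\to\mathrm{Gr}$ satisfying $\xi^p(x)\oplus\xi^p(y)\oplus\xi^{d-2p}(z)=\mathbb{K}^d$ for distinct $x,y,z$; the key structural consequence, established in \cite{PSW}, is that the composition with the $p$-th exterior power $\Lambda^p\rho:\Gamma\to\mathsf{GL}(\Lambda^p\mathbb{K}^d)$ is \emph{projective Anosov}, with limit map $x\mapsto \Lambda^p\xi^p(x)\in\mathbb{P}(\Lambda^p\mathbb{K}^d)$. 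Thus the first step is to record that $\Lambda^p\rho$ is a $1$-Anosov representation into $\mathsf{GL}_N(\mathbb{K})$ with $N=\binom{d}{p}$, and that its Gromov boundary (hence that of $\Gamma$) is unchanged.

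The second and central step is to show that, when $p$ is \emph{odd}, the exterior power $\Lambda^p\rho$ preserves a symplectic form. Here I would use the familiar fact that $\Lambda^p$ of a symplectic representation is symplectic for $p$ odd---but since $\rho$ itself need only land in $\mathsf{GL}_d$, not in a symplectic group, the honest statement is that the relevant invariant form comes from the \emph{pairing} $\Lambda^p\mathbb{K}^d\times\Lambda^{d-p}\mathbb{K}^d\to\Lambda^d\mathbb{K}^d\cong\mathbb{K}$ together with the hyperconvexity hypothesis on $\xi^{2p}$, which forces a compatibility between $\Lambda^p\xi^p$ and $\Lambda^{d-p}\xi^{d-p}$. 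Concretely, I expect one shows that the hyperconvex boundary data identify, up to the determinant twist, the image of $\Lambda^p\rho$ with a subgroup preserving a nondegenerate antisymmetric form on $\Lambda^p\mathbb{K}^d$; the antisymmetry is exactly where the parity of $p$ enters, since transposing the two factors of the wedge pairing contributes a sign $(-1)^{p(d-p)}$, which is $-1$ precisely in the odd range allowed by the hypothesis $1\le p\le d/2$ odd. After a determinant normalization to pass from $\mathsf{GL}$ to $\mathsf{SL}$ and then into $\mathsf{Sp}$, this places $\Lambda^p\rho$ inside $\mathsf{Sp}_{2M}(\mathbb{K})$ for a suitable $M$.

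The third step is then purely an invocation of the main theorems. Once $\Lambda^p\rho$ is realized as a $1$-Anosov (projective Anosov) representation of $\Gamma$ into $\mathsf{Sp}_{2M}(\mathbb{R})$ or $\mathsf{Sp}_{2M}(\mathbb{C})$, Theorem~\ref{symp1} gives, in the real case, that $\Gamma$ is virtually free or virtually a surface group, establishing part~(i); and Theorem~\ref{symp2} gives, in the complex case, that $\partial_\infty\Gamma$ cannot properly contain a $2$-sphere, establishing part~(ii). Since the Gromov boundary is an invariant of $\Gamma$ and is untouched by passing to $\Lambda^p\rho$, both conclusions transfer back to the original representation with no further work.

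The main obstacle I anticipate is the second step: verifying that hyperconvexity, rather than an a priori symplectic assumption on $\rho$, genuinely produces an \emph{invariant nondegenerate antisymmetric} bilinear form on $\Lambda^p\mathbb{K}^d$. The parity bookkeeping for the sign is routine, but one must check that the wedge pairing restricts to a nondegenerate form on the subrepresentation actually carrying the Anosov structure, and that the hyperconvexity condition involving $\xi^{2p}$ is exactly what guarantees this nondegeneracy along the limit set; controlling degeneracy away from the boundary image is the delicate point. I would handle it by arguing that transversality of $\Lambda^p\xi^p(x)$ and $\Lambda^{d-p}\xi^{d-p}(y)$ for distinct $x,y$ forces the induced form to be nondegenerate on the span of the limit directions, and then invoke the irreducibility or Zariski-density reduction standard in this setting to conclude nondegeneracy on all of $\Lambda^p\mathbb{K}^d$.
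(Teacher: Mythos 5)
There is a genuine gap in your second step, and it is the crux of the whole argument. You want to equip $\wedge^p\mathbb{K}^d$ with a $\wedge^p\rho(\Gamma)$-invariant nondegenerate antisymmetric form so as to land in $\mathsf{Sp}_{2M}(\mathbb{K})$ and quote Theorems \ref{symp1} and \ref{symp2}. This is impossible in general: the wedge pairing you invoke is a pairing $\wedge^p\mathbb{K}^d\times\wedge^{d-p}\mathbb{K}^d\rightarrow\wedge^d\mathbb{K}^d$ between \emph{two different spaces}, and it only becomes a bilinear form on $\wedge^p\mathbb{K}^d$ itself when $d=2p$ (which is exactly the setting of Corollary \ref{2q+1}). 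For $d>2p$ there is no reason for an invariant symplectic form to exist, and indeed it cannot in general: take $p=1$ and $d$ odd, so that $(1,1,2)$-hyperconvex representations into $\mathsf{GL}_d(\mathbb{R})$ exist (e.g.\ Hitchin representations into $\mathsf{SL}_3(\mathbb{R})$), yet $\wedge^1\mathbb{K}^d=\mathbb{K}^d$ is odd-dimensional and carries no symplectic form whatsoever. Your sign bookkeeping also does not close: $(-1)^{p(d-p)}$ equals $(-1)^{d+1}$ for $p$ odd, which is $+1$ when $d$ is odd. Hyperconvexity is a transversality condition on the limit maps; it does not produce an invariant tensor on the ambient exterior power, and no irreducibility or Zariski-density argument will conjure one up.

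The paper's proof sidesteps invariance entirely, and this is the idea your proposal is missing: one does not need a $\Gamma$-invariant form, only a continuous, odd, nowhere-vanishing function on (a sphere embedded in) the boundary. Fix $w_0\in\partial_\infty\Gamma$, lift $\tau_p\circ\xi_\rho^p$ on $\partial_\infty\Gamma\smallsetminus\{w_0\}$ to $f_p$ with values in $\mathsf{S}^1(\wedge^p\mathbb{K}^d)$ using transversality to $\tau_{d-p}(\xi_\rho^{d-p}(w_0))$ (Observation \ref{lift}), fix once and for all a lift $V_0\in\wedge^{d-2p}\mathbb{K}^d$ of $\tau_{d-2p}(\xi_\rho^{d-2p}(w_0))$, and set $\mathcal{H}(x,y):=f_p(x)\wedge f_p(y)\wedge V_0\in\wedge^d\mathbb{K}^d\cong\mathbb{K}$. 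Hyperconvexity, i.e.\ $\xi_\rho^p(x)\oplus\xi_\rho^p(y)\oplus\xi_\rho^{d-2p}(w_0)=\mathbb{K}^d$, gives $\mathcal{H}(x,y)\neq 0$ for $x\neq y$, and $\mathcal{H}(y,x)=(-1)^{p^2}\mathcal{H}(x,y)=-\mathcal{H}(x,y)$ since $p$ is odd. From there the topological endgame is the one you describe (intermediate value theorem on $S^1$ for $\mathbb{K}=\mathbb{R}$, Borsuk--Ulam on $S^2$ for $\mathbb{K}=\mathbb{C}$), applied directly to $\mathcal{H}$ rather than after a reduction to the symplectic theorems. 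So your first and third steps are in the right spirit, but the bridge between them has to be this boundary-level antisymmetric pairing, not an invariant symplectic structure.
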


For $\mathbb{K}=\mathbb{R}$ and $d=2p$, the previous previous theorem recovers the main result of Dey \cite{Dey} in dimension $2p$ and of the second named author \cite{Tso}.

In contrast to the case of Borel Anosov representations into $\mathsf{SL}_{d}(\mathbb{R})$, where only virtually free or virtually surface groups arise in certain dimensions (see \cite{Dey}, \cite{Tso} and \cite[Thm. 1.1 \& 1.6]{CanaryT}), there is no known classification of the domain groups of Borel Anosov representations into $\mathsf{SL}_d(\mathbb{C})$ for any $d\geq 3$. In the view of Corollary \ref{2q+1}, this leads to the following analogue of Sambarino's question\footnote{Andr\'es Sambarino asked whether a torsion-free word hyperbolic group which admits a Borel Anosov representation into $\mathsf{SL}_d(\mathbb{R})$ is necessarily a free group or a surface group.} for complex special linear groups.

\begin{question}\label{complex} Suppose that $\Gamma$ is a word hyperbolic group which admits a Borel Anosov representation into $\mathsf{SL}_d(\mathbb{C})$ for some $d \geq 4$. Is $\Gamma$ virtually isomorphic to a convex cocompact subgroup of $\mathsf{SL}_2(\mathbb{C})$?\end{question}

We require here that $d\geq 4$ since the inclusion of every uniform lattice $\Delta \subset \mathsf{SU}(2,1)$ into $\mathsf{SL}_3(\mathbb{C})$ is $1$-Anosov and the Gromov boundary of $\Delta$ is homeomorphic to the $3$-sphere.

\medskip
\noindent \textbf{Acknowledgements.} The first named author acknowledges funding by the DFG, 427903332 (Emmy Noether), and is supported by the DFG under Germany's Excellence Strategy EXC-2181/1-390900948. The second named author acknowledges support from the European Research Council (ERC) under the European's Union Horizon 2020 research and innovation programme (ERC starting grant DiGGeS, grant agreement No 715982).

\section{Background}\label{background}
\hbox{In this section, we provide some notation, define Anosov representations and prove a key proposition.}

Let $\mathbb{K}=\mathbb{R}$ or $\mathbb{C}$. For $d \geqslant 2$, $(e_1,\ldots,e_d)$ is the canonical basis of $\mathbb{K}^d$ equipped with the standard Hermitian inner product $\langle \cdot,\cdot\rangle$ and denote by $\mathsf{S}^1(\mathbb{K}^d)$ the unit sphere. Throughout this note, $\mathsf{Sp}_{2m}(\mathbb{K})$ is the automorphism group of the symplectic bilinear form $\omega:\mathbb{K}^{2m}\times \mathbb{K}^{2m}\rightarrow \mathbb{K}$, $$\omega(u,v):=u^T (\Omega_mv)=\sum_{i=1}^n u_iv_{i+n}-\sum_{i=1}^{n}u_{i+n}v_i \  \ u,v\in \mathbb{K}^{2m},$$ where $\Omega_m:=\begin{pmatrix}[0.8]
0 & I_m \\ 
-I_m &0\end{pmatrix}$. 

\subsection{Anosov representations.} \label{definition} Throughout this note, $\Gamma$ denotes a word hyperbolic group and $\partial_{\infty}\Gamma$ its Gromov boundary. For more background on hyperbolic groups we refer to \cite{Gromov}. We equip $\Gamma$ with a left-invariant word metric and for $\gamma \in \Gamma$, $|\gamma|_{\Gamma}$ denotes the distance of $\gamma$ from the identity element of $\Gamma$. For a matrix $g \in \mathsf{SL}_d(\mathbb{K})$ let $\sigma_{1}(g) \geqslant \sigma_{2}(g) \geqslant \ldots \geqslant \sigma_{d}(g)$ be the singular values of $g$. Recall that for each $i$, $\sigma_{i}(g)=\sqrt{\lambda_{i}(gg^{\ast})}$, where $g^{\ast}$ is the conjugate transpose of $g$ and $\lambda_1(gg^{\ast})\geq \cdots \geq \lambda_{d}(gg^{\ast})$ are the moduli of eigenvalues of $gg^{\ast}$.

Fix $k\in \mathbb{N}$ with $1\leq k\leq \frac{d}{2}$. A representation $\rho:\Gamma \rightarrow \mathsf{SL}_d(\mathbb{K})$ of a word hyperbolic group $\Gamma$ is called {\em $k$-Anosov} if and only if there exist $c,\mu >0$ such that $$\frac{\sigma_{k}(\rho(\gamma))}{\sigma_{k+1}(\rho(\gamma))} \geqslant ce^{\mu |\gamma|_{\Gamma}} \ \ \forall \gamma \in \Gamma.$$ The fact that the above singular value gap characterization is equivalent to Labourie's original dynamical definition is due to Kapovich--Leeb--Porti in \cite{KLP2} and independently to Bochi--Potrie--Sambarino \cite{BPS}.

A $k$-Anosov representation $\rho:\Gamma \rightarrow \mathsf{SL}_d(\mathbb{K})$ admits a unique pair of $\rho$-equivariant continuous maps $\xi_{\rho}^{k}: \partial_{\infty}\Gamma \rightarrow \mathsf{Gr}_{k}(\mathbb{K}^d)$ and $\xi_{\rho}^{d-k}: \partial_{\infty}\Gamma \rightarrow \mathsf{Gr}_{d-k}(\mathbb{K}^d)$ called the {\em Anosov limit maps of $\rho$}. Two of the crucial properties of the limit maps of $\rho$ is that they are {\em compatible} (i.e. $\xi_{\rho}^k(x)\subset \xi_{\rho}^{d-k}(x)$ for every $x\in\partial_{\infty}\Gamma$) and {\em transverse}, i.e. $\mathbb{K}^d=\xi_{\rho}^k(x)\oplus \xi_{\rho}^{d-k}(y)$ when $x\neq y$. For more background on Anosov representations and their limit maps we refer the reader to \cite{Labourie, GW, KLP2, GGKW, BPS, Canary}.

We will need the following elementary observation.

\begin{observation} \label{lift} Let $X$ be a metrizable space and $\xi:X \rightarrow \mathbb{P}(\mathbb{K}^d)$ be a continuous map. Suppose that there exists $W\in \mathsf{Gr}_{d-1}(\mathbb{K}^d)$ with $\mathbb{K}^d=W\oplus f(x)$ for every $x \in X$. Then $\xi$ lifts to a continuous map $\widetilde{\xi}:X \rightarrow \mathsf{S}^1(\mathbb{K}^d)$.\end{observation}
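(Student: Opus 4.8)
The plan is to use transversality to the fixed hyperplane to pin down a canonical representative vector inside each line. Since $W$ has codimension one, I would fix a nonzero linear functional $\phi \in (\mathbb{K}^d)^\ast$ with $\ker \phi = W$; it is unique up to a nonzero scalar. The hypothesis $\mathbb{K}^d = W \oplus \xi(x)$ says exactly that the line $\xi(x)$ is not contained in $\ker \phi$, so $\phi$ restricts to a nonzero functional on $\xi(x)$ for every $x \in X$. Consequently there is a unique vector $w(x) \in \xi(x)$ with $\phi(w(x)) = 1$, and I would set
$$ \widetilde{\xi}(x) := \frac{w(x)}{\|w(x)\|}. $$
Since $\phi(w(x)) = 1$ forces $w(x) \neq 0$, the normalization is well defined and $\widetilde{\xi}(x) \in \xi(x) \cap \mathsf{S}^1(\mathbb{K}^d)$ is a genuine lift of $\xi$; it then remains only to check continuity.

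For continuity, I would observe that $x \mapsto w(x)$ is the composition of $\xi$ with the canonical section of the tautological bundle over the affine chart
$$ U_\phi := \{\,\ell \in \mathbb{P}(\mathbb{K}^d) : \phi|_{\ell} \neq 0\,\}, $$
namely the section $s_\phi : U_\phi \to \mathbb{K}^d$ sending a line $\ell$ to the unique vector $v \in \ell$ with $\phi(v) = 1$. By transversality $\xi(X) \subseteq U_\phi$, so $w = s_\phi \circ \xi$. To see that $s_\phi$ is continuous it suffices to choose coordinates in which $\phi$ is the last coordinate functional $v \mapsto v_d$: then $U_\phi$ is the standard affine chart $\{[v_1:\cdots:v_d] : v_d \neq 0\}$ and $s_\phi([v]) = (v_1/v_d, \ldots, v_{d-1}/v_d, 1)$, which is the (continuous) inverse of the affine coordinate chart. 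Composing with $\xi$, and finally with the continuous map $v \mapsto v/\|v\|$ on $\mathbb{K}^d \setminus \{0\}$, yields the continuity of $\widetilde{\xi}$.

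The one conceptual point worth isolating, which I regard as the heart of the statement, is why a global continuous lift exists at all: the tautological line bundle over $\mathbb{P}(\mathbb{K}^d)$ admits no global nonvanishing section, so an arbitrary continuous map into $\mathbb{P}(\mathbb{K}^d)$ need not lift to $\mathsf{S}^1(\mathbb{K}^d)$. What rescues us is precisely the hypothesis that all the lines $\xi(x)$ are transverse to the single fixed hyperplane $W$, which confines $\xi$ to the one affine chart $U_\phi$ over which the bundle is trivial. I would note that this argument uses neither the metrizability of $X$ nor the dimension $d$ in any essential way; transversality to a common hyperplane does all the work, and it is exactly the structure supplied by the Anosov limit maps in the applications. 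A naive patching of local lifts via a partition of unity would be the alternative, but over $\mathbb{K} = \mathbb{C}$ convex combinations of the local representatives can cancel in phase and vanish, so the functional normalization above is the clean way to sidestep that difficulty.
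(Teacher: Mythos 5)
Your proposal is correct and is essentially the paper's own argument: the paper fixes $g$ with $W=g\langle e_2,\ldots,e_d\rangle$ and normalizes the $ge_1$-component of a representative to $1$ before rescaling to unit norm, which is exactly your normalization $\phi(w(x))=1$ for a functional $\phi$ with $\ker\phi=W$. Your write-up just makes the continuity of the resulting section over the affine chart more explicit.
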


\begin{proof} We choose $g\in \mathsf{GL}_d(\mathbb{K})$ such that $W=g\langle e_2,\ldots,e_d\rangle$. Then it is immediate to define a lift $\widetilde{\xi}$ of $\xi$: for $x\in X$ write $\xi(x)=[a_{x}ge_1+gv_{x}]$, for some $a_{x}\in \mathbb{K}\smallsetminus \{0\}$ and $v_{x}\in W$, and let $$\widetilde{\xi}(x)=\frac{ge_1+a_{x}^{-1}gv_{x}}{\big|\big|ge_1+ a_x^{-1}gv_x\big|\big|}.$$ It is straightforward to check that $\widetilde{\xi}$ is well defined, continuous and a lift of $\xi$.\end{proof}

We denote by $K_d$ the maximal compact subgroup of $\mathsf{SL}_{m}(\mathbb{K})$ preserving the inner product $\langle \cdot,\cdot\rangle$. For a point $[k_1e_1]\in \mathbb{P}(\mathbb{K}^d)$ and a hyperplane $V=k_2\langle e_1,\ldots, e_{d-1}\rangle$, where $k_1,k_2\in K_d$, their distance is defined as $\textup{dist}([k_1e_1],V):=|\langle k_1e_1,k_2e_d\rangle|$. 

We close this section with following proposition which is crucial for the proof of the theorems in the following section.

\begin{proposition}\label{lift2} Let $\Gamma$ be a non-elementary word hyperbolic group and fix $w_0\in \partial_{\infty}\Gamma$. Suppose that $\rho:\Gamma \rightarrow \mathsf{Sp}_{2m}(\mathbb{K})$ is an $1$-Anosov representation with Anosov limit map $\xi_{\rho}^1:\partial_{\infty}\Gamma \rightarrow \mathbb{P}(\mathbb{K}^{2m})$. Then there exists a continuous lift $\widetilde{\xi}_{\rho}^1:\partial_{\infty}\Gamma \smallsetminus \{w_0\}\rightarrow \mathsf{S}^1(\mathbb{K}^{2m})$ of $\xi_{\rho}^1$ \textup{(}restricted on $\partial_{\infty}\Gamma\smallsetminus \{w_0\}$\textup{)} and for every pair of distinct points $x,y\neq w_0$ we have $$\omega \big(\widetilde{\xi}_{\rho}^1(x),\widetilde{\xi}_{\rho}^1(y)\big)\neq 0.$$
\end{proposition}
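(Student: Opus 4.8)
The plan is to split the argument into two essentially independent parts: first produce the lift using Observation \ref{lift}, and then prove that $\omega$ is nonzero on pairs of distinct limit lines. The second part is the heart of the matter and rests on a single identity relating the two Anosov limit maps of $\rho$.

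For the lift, I would apply Observation \ref{lift} directly. What is needed is a single hyperplane $W \in \mathsf{Gr}_{2m-1}(\mathbb{K}^{2m})$ transverse to every line $\xi_{\rho}^1(x)$ with $x \neq w_0$. The map $\xi_{\rho}^1$ comes paired with a compatible, transverse map $\xi_{\rho}^{2m-1}:\partial_{\infty}\Gamma \rightarrow \mathsf{Gr}_{2m-1}(\mathbb{K}^{2m})$, and transversality says precisely that $\mathbb{K}^{2m}=\xi_{\rho}^1(x)\oplus \xi_{\rho}^{2m-1}(w_0)$ whenever $x\neq w_0$. So I would set $W=\xi_{\rho}^{2m-1}(w_0)$ and feed $\xi = \xi_{\rho}^1$ restricted to $\partial_{\infty}\Gamma\setminus\{w_0\}$ into Observation \ref{lift}; this yields at once the continuous lift $\widetilde{\xi}_{\rho}^1$.

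For the non-vanishing, note that $\widetilde{\xi}_{\rho}^1(x)$ spans the line $\xi_{\rho}^1(x)$, so the scalar $\omega\big(\widetilde{\xi}_{\rho}^1(x),\widetilde{\xi}_{\rho}^1(y)\big)$ vanishes if and only if $\xi_{\rho}^1(y)$ lies in the symplectic-orthogonal hyperplane $\big(\xi_{\rho}^1(x)\big)^{\perp_{\omega}}$. The key claim is therefore the identity
$$\big(\xi_{\rho}^1(x)\big)^{\perp_{\omega}}=\xi_{\rho}^{2m-1}(x),\qquad x\in\partial_{\infty}\Gamma.$$
Granting this, transversality concludes the proof immediately: for $x\neq y$ one has $\xi_{\rho}^1(y)\oplus \xi_{\rho}^{2m-1}(x)=\mathbb{K}^{2m}$, hence $\xi_{\rho}^1(y)\not\subset \xi_{\rho}^{2m-1}(x)=\big(\xi_{\rho}^1(x)\big)^{\perp_{\omega}}$, which gives $\omega\big(\widetilde{\xi}_{\rho}^1(x),\widetilde{\xi}_{\rho}^1(y)\big)\neq 0$.

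To establish the identity I would argue by continuity from the attracting fixed points, which are dense in $\partial_{\infty}\Gamma$ since $\Gamma$ is non-elementary. The map $x\mapsto \big(\xi_{\rho}^1(x)\big)^{\perp_{\omega}}$ is continuous and $\rho$-equivariant because $\rho$ preserves $\omega$, and it is compatible with $\xi_{\rho}^1$ since $\omega$ is alternating; thus both this map and $\xi_{\rho}^{2m-1}$ are continuous equivariant sections into $\mathsf{Gr}_{2m-1}(\mathbb{K}^{2m})$, and it suffices to check they agree on the dense set $\{\gamma^{+}\}$. For an infinite-order $\gamma$ the matrix $\rho(\gamma)$ is proximal, with a unique eigenvalue $\lambda_1$ of largest modulus; since $\rho(\gamma)\in \mathsf{Sp}_{2m}(\mathbb{K})$ its eigenvalues come in reciprocal pairs, so $\lambda_1^{-1}$ is the unique eigenvalue of smallest modulus, spanning $\xi_{\rho}^1(\gamma^{-})$. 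The relation $\omega(\rho(\gamma)u,\rho(\gamma)v)=\omega(u,v)$ forces $\omega(v_1,w)=0$ for every eigenvector $w$ whose eigenvalue is not $\lambda_1^{-1}$, where $v_1$ spans $\xi_{\rho}^1(\gamma^{+})$; nondegeneracy of $\omega$ then shows $\big(\xi_{\rho}^1(\gamma^{+})\big)^{\perp_{\omega}}$ is exactly the sum of all eigenspaces other than the $\lambda_1^{-1}$-eigenline, which is precisely the attracting hyperplane $\xi_{\rho}^{2m-1}(\gamma^{+})$. I expect this eigenvalue bookkeeping — getting the reciprocal pairing and the attracting/repelling conventions right so that the two hyperplanes genuinely coincide — to be the only delicate point; everything else is continuity, density, and the transversality of the limit maps.
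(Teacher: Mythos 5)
Your proposal is correct, and its first half (producing the lift via Observation \ref{lift} applied to $W=\xi_{\rho}^{2m-1}(w_0)$) is exactly what the paper does. For the non-vanishing, both arguments ultimately rest on the same fact, namely that the Anosov limit maps of a $1$-Anosov representation into $\mathsf{Sp}_{2m}(\mathbb{K})$ land in the symplectic flag variety, i.e.\ $\xi_{\rho}^{2m-1}=(\xi_{\rho}^{1})^{\perp_{\omega}}$, so that $\omega$-orthogonality of two limit lines contradicts transversality. The difference is in how this fact is handled: the paper absorbs it into the phrase ``by the compatibility of the limit maps we may choose $k_x,k_y\in\mathsf{Sp}_{2m}(\mathbb{K})\cap K_d$'' adapted to the flags, and then runs a short matrix computation identifying $|\omega(k_ye_1,k_xe_1)|$ with $\textup{dist}(\xi_{\rho}^1(x),\xi_{\rho}^{2m-1}(y))$; you instead state the identity $(\xi_{\rho}^1(x))^{\perp_\omega}=\xi_{\rho}^{2m-1}(x)$ explicitly and prove it by continuity from the dense set of attracting fixed points, using the reciprocal pairing of eigenvalues of a proximal symplectic element. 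Your route is slightly longer but arguably more transparent, since it makes visible the precise input that the paper's choice of $k_y$ quietly presupposes. Two small points to tighten: when $\rho(\gamma)$ is not semisimple you should say ``generalized eigenspaces'' rather than ``eigenspaces'' (the pairing argument $\omega(v_1,w)=\lambda_1\mu\,\omega(v_1,w)$ extends to generalized eigenvectors by an easy induction, and the dimension count via nondegeneracy of $\omega$ still closes the argument); and you should note that the attracting hyperplane $\xi_{\rho}^{2m-1}(\gamma^{+})$ is by definition the $\rho(\gamma)$-invariant complement of the repelling line $\xi_{\rho}^1(\gamma^{-})$, which is what identifies it with that sum of generalized eigenspaces.
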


\begin{proof} Let $\xi_{\rho}^{2m-1}:\partial_{\infty}\Gamma \rightarrow \mathsf{Gr}_{2m-1}(\mathbb{K}^{2m})$ be the limit map transverse to $\xi_{\rho}^1$. Note that $\xi_{\rho}^1(x)$ is transverse to the hyperplane $\xi_{\rho}^{2m-1}(w_0)$ for every $x\neq w_0$, thus, by Observation \ref{lift}, we obtain a continuous lift $\widetilde{\xi}_{\rho}^1:\partial_{\infty}\Gamma \smallsetminus \{w_0\}\rightarrow \mathsf{S}^1(\mathbb{K}^{2m})$ of $\xi_{\rho}^1$ restricted on $\partial_{\infty}\Gamma \smallsetminus \{w_0\}$.

Now if $x,y\neq w_0$ are distinct, by the compatibility of the limit maps of $\rho$, we may choose $k_x,k_y\in \mathsf{Sp}_{2m}(\mathbb{K})\cap K_{d}$ such that $$\xi_{\rho}^1(x)=[k_xe_1], \xi_{\rho}^{1}(y)=[k_ye_1] \ \textup{and}\ \xi_{\rho}^{2m-1}(y)=k_y\big \langle e_1,\ldots ,e_m,e_{m+2},\ldots,e_{2m}\big \rangle.$$ Then a straightforward computation shows that \begin{align*}\big|\omega\big(k_ye_1,k_xe_1\big)\big|&=\big|(k_ye_1)^T\Omega_m k_xe_1\big|=\big|e_1^T(k_y^T\Omega_mk_xe_1)\big|=\big|e_1^T(\Omega_mk_y^{-1}k_xe_1)\big|\\&=\big|\langle \Omega_mk_y^{-1}k_xe_1,e_1\rangle\big|=\big|\langle k_y^{-1}k_xe_1,e_{m+1}\rangle\big|=\textup{dist}\big(\xi_{\rho}^1(x),\xi_{\rho}^{2m-1}(y)\big)\end{align*} and hence by transversality we obtain $\big|\omega(\widetilde{\xi}_{\rho}^1(x),\widetilde{\xi}_{\rho}^{1}(y)\big)\big|=\big|\omega(k_xe_1,k_ye_1)\big|\neq 0$. This concludes the proof of the proposition. \end{proof}

\section{proofs} By default, we say that a representation $\rho:\Gamma \rightarrow \mathsf{Sp}_{2m}(\mathbb{K})$ is $k$-Anosov if $\rho$ is $k$-Anosov as a representation of $\Gamma$ into $\mathsf{SL}_{2m}(\mathbb{K})$.

\begin{proof}[Proof of Theorem \ref{symp1}] Suppose that there exists an $1$-Anosov representation $\rho:\Gamma \rightarrow \mathsf{Sp}_{2m}(\mathbb{R})$.

Fix $w_0\in \partial_{\infty}\Gamma$ and suppose that $\Gamma$ is not virtually a surface group or a free group. By a result of Gabai \cite{Gabai}, $\partial_{\infty}\Gamma$ cannot be the circle and by \cite{BK} we may find an embedding $\iota_1:S^1\rightarrow \partial_{\infty}\Gamma \smallsetminus \{w_0\}$. 

By Proposition \ref{lift2} there exists a lift $\widetilde{\xi}_\rho^1:\partial_{\infty}\Gamma \smallsetminus \{w_0\}\rightarrow \mathsf{S}^1(\mathbb{R}^{2m})$ with $\omega\big(\widetilde{\xi}_{\rho}^1(x),\widetilde{\xi}_{\rho}^1(y)\big)\neq 0$ when $x\neq y$. Then we consider the map $L_1:S^1\rightarrow \mathbb{R}$ defined as follows $$L_1(z):=\omega\big(\widetilde{\xi}_{\rho}^1(\iota_1(z)),\widetilde{\xi}_{\rho}^1(\iota_1(-z))\big) \ \ z\in S^1,$$ where $-z\in S^1$ denotes the antipodal point of $z$. The map $L_1$ is odd, since $\omega$ is symplectic, and everywhere non-zero. However, this contradicts the fact that $L_1$ has to have constant sign. Thus, we conclude that $\Gamma$ is virtually a surface group or virtually a free group.\end{proof}

\begin{proof}[Proof of Theorem \ref{symp2}] Suppose that there exists an $1$-Anosov representation $\rho:\Gamma \rightarrow \mathsf{Sp}_{2m}(\mathbb{C})$. We prove that $\partial_{\infty}\Gamma$ cannot properly contain $S^2$.

We argue by contradiction. Indeed, suppose that there exists $w_0\in \partial_{\infty}\Gamma$ and an embedding $\iota_2:S^2 \rightarrow \partial_{\infty}\Gamma\smallsetminus \{w_0\}$. By Proposition \ref{lift2}, the Anosov limit map $\xi_{\rho}^1$ of $\rho$ lifts to a continuous map $\widetilde{\xi}_{\rho}^1:\partial_{\infty}\Gamma \smallsetminus \{w_0\}\rightarrow \mathsf{S}^1(\mathbb{C}^{2m})$. By using this lift we define the map $L_2:S^2\rightarrow \mathbb{C}$, $$L_2(z):=\omega \big(\widetilde{\xi}_{\rho}^1(\iota_2(z)),\widetilde{\xi}_{\rho}^1(\iota_2(-z))\big) \ \ z\in S^2.$$ The map $L_2$ is continuous, $L_2(z)\neq 0$ for every $z\in S^2$ and $L_2$ is odd since $\omega$ is symplectic. However, such map $L_2$ cannot exist by the Borsuk--Ulam theorem (e.g. see \cite[Cor. 2B.7]{Hatcher}). We have reached a contradiction and hence $\partial_{\infty}\Gamma$ cannot properly contain a $2$-sphere. \end{proof}

\begin{rmk} Proposition \ref{lift2} shows that if $\rho:\Gamma \rightarrow \mathsf{Sp}_{2m}(\mathbb{K})$ is $1$-Anosov with limit map $\xi_{\rho}^1:\partial_{\infty}\Gamma \rightarrow \mathbb{P}(\mathbb{K}^{2m})$, then $|\omega (\xi_{\rho}^1(x),\xi_{\rho}^1(y))|\neq 0$ for every pair of distinct points $x,y\in \partial_{\infty}\Gamma$. In particular, if $\Gamma$ is a uniform lattice in $\mathsf{Sp}_2(\mathbb{K})\cong \mathsf{SL}_2(\mathbb{K})$, then $\xi_{\rho}^1$ cannot lift to a continuous map $\partial_{\infty}\Gamma \rightarrow \mathsf{S}^1(\mathbb{K}^{2m})$.\end{rmk}
\medskip

For $d\geq 2$ and $1\leq k\leq d$ let $\wedge^{k}: \mathsf{SL}_{d}(\mathbb{K})\rightarrow \mathsf{SL}(\wedge^{k}\mathbb{K}^{d})$ be the $k^{\textup{th}}$ exterior power. We denote by $\tau_{k}:\mathsf{Gr}_{k}(\mathbb{K}^d) \rightarrow \mathbb{P}(\wedge^k \mathbb{K}^d)$ and $\tau_{d-k}:\mathsf{Gr}_{d-k}(\mathbb{K}^d) \rightarrow \mathsf{Gr}_{d_k-1}(\wedge^{k}\mathbb{K}^d)$, $d_k=\binom{d}{k}$, the Pl${\textup{\"u}}$cker embeddings. It is a standard fact that if $\rho:\Gamma \rightarrow \mathsf{SL}_d(\mathbb{K})$ is $k$-Anosov, the pair of Anosov limit maps of the $1$-Anosov representation $\wedge^k \rho:\Gamma \rightarrow \mathsf{SL}(\wedge^k \mathbb{K}^d)$ is $(\tau_k\circ \xi_{\rho}^k, \tau_{d-k}\circ \xi_{\rho}^{d-k})$.

\begin{proof}[Proof of Corollary \ref{odd}] Observe that if $1\leq k\leq m$ is odd, the matrix $\wedge^{k}\Omega_m$ is skew symmetric and hence $\wedge^{k}\mathsf{Sp}_{2m}(\mathbb{K})$ preserves a symplectic form on $\wedge^k\mathbb{K}^{2m}$. If $\rho:\Gamma \rightarrow \mathsf{Sp}_{2m}(\mathbb{K})$ is $k$-Anosov, the exterior power $\wedge^k \rho$ is $1$-Anosov, thus, items \textup{(i)} and \textup{(ii)} follow immediately by Theorem \ref{symp1} and Theorem \ref{symp2} respectively.   \end{proof}

\begin{proof}[Proof of Corollary \ref{2q+1}] Observe that the group $\wedge^{2q+1}\mathsf{SL}_{4q+2}(\mathbb{C})$ preserves the symplectic bilinear form $\omega_0: \wedge^{2q+1}\mathbb{C}^{4q+2}\times \wedge^{2q+1}\mathbb{C}^{4q+2}\rightarrow \mathbb{C}$ given by the formula $$\omega_0(u,v)=u\wedge v \ \ u,v\in \wedge^{2q+1}\mathbb{C}^{4q+2}.$$ Now if there exists an $(2q+1)$-Anosov representation $\rho:\Gamma \rightarrow \mathsf{SL}_{4q+2}(\mathbb{C})$, its exterior power $\wedge^{2q+1}\rho$ is $1$-Anosov and preserves the symplectic form $\omega_0$. Thus, by Theorem \ref{symp2} we conclude that $\partial_{\infty}\Gamma$ cannot properly contain a $2$-sphere, contradicting our assumption.  Therefore, there is no $(2q+1)$-Anosov representation $\rho:\Gamma \rightarrow \mathsf{SL}_{4q+2}(\mathbb{C})$. \end{proof}

Before we give the proof of Theorem \ref{hyp} let us recall the definition of a $(p,q,r)$-hyperconvex representation.

\begin{definition}\label{hyperconvex} \textup{(}\cite{PSW}\textup{)} Let $\rho:\Gamma \rightarrow \mathsf{GL}_{d}(\mathbb{K})$ be a $\{p,q,r\}$-Anosov representation, where $p+q\leq r$. The representation $\rho$ is called $(p,q,r)$-hyperconvex if for every triple $x,y,w \in \partial_{\infty}\Gamma$ of distinct points we have: $$\big(\xi_{\rho}^p(x)\oplus \xi_{\rho}^q(y)\big)\cap \xi_{\rho}^{d-r}(w)=(0).$$\end{definition}

\begin{proof}[Proof of Theorem \ref{hyp}] Suppose that $\rho:\Gamma \rightarrow \mathsf{GL}_d(\mathbb{K})$ is $(p,p,2p)$-hyperconvex and fix $w_0\in \partial_{\infty}\Gamma$. For $x\neq w_0$, $\tau_p(\xi_{\rho}^p(x))$ is transverse to the hyperplane $\tau_{d-p}(\xi_{\rho}^{d-p}(w_0))$, thus, by Observation \ref{lift}, $\tau_p\circ \xi_{\rho}^p:\partial_{\infty}\Gamma \smallsetminus \{w_0\}\rightarrow \mathbb{P}(\wedge^p \mathbb{K}^d)$ lifts to a well defined continuous map $f_p:\partial_{\infty}\Gamma \smallsetminus\{w_0\} \rightarrow \mathsf{S}^1(\wedge^p\mathbb{K}^d)$. After fixing a lift $V_0\in \wedge^{d-2p}\mathbb{K}^{d}$ of $\tau_{d-2p}(\xi_{\rho}^{d-2p}(w_0))$, consider $\mathcal{H}:(\partial_{\infty}\Gamma \smallsetminus \{w_0\})\times (\partial_{\infty}\Gamma \smallsetminus \{w_0\})\rightarrow \mathbb{K}$ defined as follows $$\mathcal{H}(x,y):=f_p(x)\wedge f_p(y)\wedge V_0.$$ The map $\mathcal{H}$ is continuous, $\mathcal{H}(x,y)\neq 0$ for every $x\neq y$ since $\xi_{\rho}^p(x)\oplus \xi_{\rho}^p(y)\oplus \xi_{\rho}^{d-2p}(w_0)=\mathbb{K}^d$ and \begin{align*}\mathcal{H}(y,x)=f_p(y)\wedge f_p(x)\wedge V_0=(-1)^{p^2}f_p(x)\wedge f_p(y)\wedge V_0=-\mathcal{H}(x,y) \end{align*} because $p$ is odd.
\medskip

\noindent \textup{(i)} $\mathbb{K}=\mathbb{R}$. Suppose that $\Gamma$ is not virtually a surface group or a free group. The main results from \cite{Gabai} and \cite{BK} imply that there exists an embedding $j_1:S^1\rightarrow \partial_{\infty}\Gamma \smallsetminus \{w_0\}$. Then the map $h_1:S^1\rightarrow \mathbb{R}$, $$h_1(z)=\mathcal{H}\big(j_1(z),j_1(-z)\big) \ \ z \in S^1,$$ is odd and nowhere vanishing, a contradiction. It follows that $\Gamma$ is virtually a free group or virtually a surface group.
\medskip

\noindent \textup{(ii)} $\mathbb{K}=\mathbb{C}$. Suppose that $\partial_{\infty}\Gamma$ properly contains a $2$-sphere. Fix an embedding $\iota_2:S^2\rightarrow \partial_{\infty}\Gamma \smallsetminus \{w_0\}$ and we see that the map $h_2:S^2\rightarrow \mathbb{C}$, $$h_2(z)=\mathcal{H}\big(j_2(z),j_2(-z)\big) \ \ z\in S^2,$$ is odd and nowhere vanishing, contradicting the Borsuk--Ulam theorem. This shows that $\partial_{\infty}\Gamma$ cannot contain a $2$-sphere.\end{proof}

\end{document}